\setlist[enumerate]{label=({\roman*})}
\newtheorem{theorem}{Theorem}
\newtheorem{proposition}[theorem]{Proposition}
\newtheorem{lemma}[theorem]{Lemma}
\theoremstyle{definition}
\newtheorem{definition}[theorem]{Definition}
\newtheorem{remark}[theorem]{Remark}
\newtheorem{question}[theorem]{Question}
\numberwithin{equation}{section}
\numberwithin{theorem}{section}
\newcommand\numberthis{\addtocounter{equation}{1}\tag{\theequation}}
\def\<{\langle}
\def\>{\rangle}
\def\Z{\mathbb{Z}}
\def\R{\mathbb{R}}
\def\whM{\widehat M}
\def\whX{\widehat X}
\def\wtV{\widetilde V}
\def\Gab{\Gamma^{\mathrm{ab}}}
\begin{document}

\title[Transitivity and an abelian Liv\v{s}ic theorem]{
Transitivity and an abelian Liv\v{s}ic theorem for covers}
\author{Mark Pollicott and Richard Sharp}
\address{Mark Pollicott, Mathematics Institute, University of Warwick,
Coventry CV4 7AL, UK}
\email{masdbl@warwick.ac.uk }
\address{Richard Sharp, Mathematics Institute, University of Warwick,
Coventry CV4 7AL, UK}
\email{R.J.Sharp@warwick.ac.uk}
\thanks{\copyright 2025. This work is licensed by a CC BY license.}
\begin{abstract}
We show that the abelian Liv\v{s}ic theorem recently obtained by
 A. Gogolev and F. Rodriguez Hertz for null-homologous periodic orbits of 
homologically full Anosov flows continues to hold 
when restricted to periodic orbits which are trivial
with respect to any regular cover for which the lifted flow is transitive.
\end{abstract}
\maketitle
\section{Introduction}
The classical  theorem by  A. N. Liv\v{s}ic  from 1972 states  that  when a real valued H\"older 
continuous cocycles vanishes around the periodic orbits of a hyperbolic system  then it is necessarily a coboundary \cite{Li}.
This is a simple, but elegant, illustration of a dynamical setting  where local data can give global properties.

More recently, there has been interest in variants  of the Liv\v{s}ic  theorem 
 for real valued cocycles which  vanish around a smaller collection of periodic orbits.
 For example,
A. Gogolev and F. Rodriguez Hertz
considered the  case of homologically full Anosov flows and  real valued H\"older cocycles 
which vanish around those periodic orbits which  are null-homologous. Under this hypothesis, they showed
that the cocycle is the sum of a coboundary and a closed $1$-form evaluated on the vector field generating the flow. They  dubbed their result an
 {\it abelian Liv\v{s}ic theorem} (owing to the presence of an additional ``abelian'' term given by a closed $1$-form).
 We note that the homologically full condition is equivalent to transitivity of the lifted flow on the universal homology
 cover.
 Subsequently, the second author showed that the same conclusion holds if the cocycle vanishes around periodic orbits which have trivial Frobenius class with respect to some amenable cover on which the lifted flow is transitive
 \cite{Sharp2024}.
 In this paper, we show that the hypothesis that the lifted flow is transitive is, in fact, sufficient for the conclusion of the abelian Liv\v{s}ic theorem to hold, without any other condition on the covering group.

 Let us formulate our result more precisely. Let $X^t : M \to M$ be a transitive Anosov flow 
 generated by a vector field $X$ and let $L_X$ denote the corresponding Lie derivative.
 Let $\mathcal P$ denote the set of prime periodic orbits for $X^t$.
 Given a continuous function $f : M \to \R$ and $\wp \in \mathcal P$, with least period $\ell(\wp)$, we write
 \[
 \int_\wp f = \int_0^{\ell(\wp)} f(X^t(x)) \, dt
 \]
 for any $x$ on $\wp$. Let $\widehat M$ be a regular cover of $M$ with covering group $\Gamma$
 (with identity element $e_\Gamma$),
 and let $\whX^t : \whM \to \whM$ be the lifted flow. Given a point $x$ on $\wp$ and a choice of lift
 $\hat x$ to $\whM$, we have $\whX^{\ell(\wp)}\hat x = \hat x\gamma$, for some $\gamma \in \Gamma$.
 The conjugacy class of $\gamma$ is independent of the choices of $x$ and $\hat x$: we call it the Frobenius class of $\wp$ and denote it by $\langle \wp \rangle_\Gamma$.
 Let 
 \[
 \mathcal P(\Gamma) = \{\wp \in \mathcal P \hbox{ : } \langle \wp \rangle_\Gamma = \{e_\Gamma\}\},
 \]
 the set of prime periodic orbits with trivial Frobenius class (i.e. which lift to periodic orbits on $\whM$).
Our main result is now the following.

\begin{theorem}\label{th:intro_main_anosov}
Let 
$X^t : M \to M$ be a transitive Anosov flow and let
$\whM$ be a regular cover of $M$, with covering group $\Gamma$, such that
the lifted flow $\whX^t : \whM \to \whM$ is transitive.
If a H\"older continuous function $f : M \to \mathbb R$ satisfies 
\begin{equation}\label{null_periodic}
\int_\wp f = 0  \quad \forall \wp \in \mathcal P(\Gamma)
\end{equation}
then
\[
f =  L_Xu + \omega(X),
\]
for some  H\"older continuous $u : M \to \mathbb R$
which is continuously differentiable along flow lines
and some smooth closed $1$-form $\omega$ on $M$.
\end{theorem}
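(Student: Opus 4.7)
The plan is to run the classical Liv\v{s}ic argument \emph{upstairs} on the cover $\widehat M$, where by hypothesis the lifted flow $\widehat X^t$ is transitive and the lift $\hat f$ of $f$ integrates to zero around every periodic orbit of $\widehat X^t$: indeed, the closed orbits of $\widehat X^t$ are precisely the lifts of the elements of $\mathcal P(\Gamma)$. The first and principal task is to prove a Liv\v{s}ic-type theorem in this noncompact setting, producing a H\"older $\hat u : \widehat M \to \R$, continuously differentiable along flow lines, with $L_{\widehat X} \hat u = \hat f$. I would implement this symbolically: a Markov section for $X^t$ yields a subshift of finite type $\sigma : \Sigma \to \Sigma$ with H\"older roof $r$ and H\"older Frobenius cocycle $\psi : \Sigma \to \Gamma$, and the lifted flow is modelled by the suspension of the $\Gamma$-extension $\sigma_\psi$, which is transitive by assumption. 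The hypothesis then becomes vanishing of the Birkhoff sums of $F(x) = \int_0^{r(x)} f(X^t x)\,dt$ on $\sigma$-periodic points $x$ with $\psi^n(x) = e_\Gamma$; fixing a dense $\sigma_\psi$-orbit, defining a function along it by the telescoping Birkhoff sum of $F$, and extending by uniform continuity via an Anosov closing lemma produces a H\"older solution $\widehat U$ of $\widehat U \circ \sigma_\psi - \widehat U = F$, from which $\hat u$ is recovered.

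Once $\hat u$ is in hand, its failure to be $\Gamma$-invariant supplies the cohomology class. Since $\hat f$ is $\Gamma$-invariant, for each $\gamma \in \Gamma$ the function $\hat u \circ \gamma - \hat u$ is annihilated by $L_{\widehat X}$, hence is $\widehat X^t$-invariant, and by transitivity must be a constant $c(\gamma) \in \R$. The identity $(\hat u \circ \gamma_1) \circ \gamma_2 = \hat u \circ (\gamma_1\gamma_2)$ shows that $c : \Gamma \to \R$ is a group homomorphism, in particular factoring through $\Gamma^{\mathrm{ab}}$. Composing with the surjection $\pi_1(M) \to \Gamma$ gives a homomorphism $\pi_1(M) \to \R$ that factors through $H_1(M, \R)$, i.e.\ a de Rham class in $H^1(M, \R)$; let $\omega$ be any smooth closed $1$-form representing this class.

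To finish, I correct by $\omega$. Lift $\omega$ to $\hat\omega$ on $\widehat M$; since the periods of $\omega$ on loops whose homotopy classes lie in $\ker(\pi_1(M) \to \Gamma) = \pi_1(\widehat M)$ vanish by construction of $\omega$, the form $\hat\omega$ admits a smooth primitive $\hat g$, and comparing periods against the definition of $c$ gives $\hat g \circ \gamma - \hat g = c(\gamma)$ for every $\gamma \in \Gamma$. Consequently $\hat u - \hat g$ is $\Gamma$-invariant and descends to a H\"older function $u : M \to \R$, continuously differentiable along flow lines, which satisfies $L_X u = L_{\widehat X}(\hat u - \hat g) = \hat f - \hat\omega(\widehat X)$; reading this equality downstairs yields $f = L_X u + \omega(X)$, as required.

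The main obstacle is the first step. The classical Liv\v{s}ic proof uses compactness both to produce a dense periodic orbit and to extract a uniform H\"older constant for the transfer function; on $\widehat M$ (equivalently, on the $\Gamma$-extension $\sigma_\psi$) transitivity of the lifted system replaces the density, and uniform hyperbolicity is inherited from the compact base $M$ through the local isometry, but genuinely controlling the H\"older estimates for $\widehat U$ uniformly across the noncompact fibre $\Gamma$, and doing so using only transitivity of $\sigma_\psi$ rather than any amenability of $\Gamma$, is the substantive difficulty and is what distinguishes this result from its predecessors. Everything after the construction of $\hat u$ — the homomorphism $c$, the $1$-form $\omega$, and the descent to $u$ — is then formal bookkeeping.
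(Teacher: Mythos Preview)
Your proposal is correct and follows essentially the same strategy as the paper: run Liv\v{s}ic on the cover, identify the obstruction to descent as a homomorphism $\Gamma\to\R$ via transitivity, and realise that homomorphism by a closed $1$-form. Two remarks. First, the paper does not pass through Markov sections; it works directly with the flow on $\widehat M$, using the Anosov closing lemma there (hyperbolicity constants lift via the local isometry), which is cleaner than your symbolic detour. Second, the ``substantive difficulty'' you flag at the end is not one: the H\"older estimate for $\hat u$ is purely local, the closing lemma applied to a nearly-recurrent orbit segment \emph{upstairs} produces a periodic orbit \emph{upstairs} (hence in $\mathcal P(\Gamma)$, so $\hat f$ integrates to zero around it), and the resulting bound involves only $\|f\|_\theta$ on the compact base---so uniformity across the $\Gamma$-fibres is automatic, with no amenability or other hypothesis needed. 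What distinguishes this result from its predecessors is simply the observation that transitivity of $\widehat X^t$ alone suffices to force the flow-invariant function $\hat u\circ\gamma-\hat u$ to be constant; the rest is, as you say, bookkeeping.
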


 If $f$ and $X$ are  $C^\infty$ (and thus $f - \omega(X) = L_Xu$ is also $C^\infty$) we can apply a result of de la Llave,
 Marco and Moryian (Theorem 2.1 of \cite{LMM}) to deduce that $u$ is $C^\infty$.

The question of whether the lift of an Anosov flow on a cover is transitive is interesting in its own right. 
The general situation is not well understood but a recent paper by
Barthelm\'e and Lu \cite{BL} has some results for
Anosov flows on $3$-manifolds. However, when $X^t$ is a geodesic flow over a compact manifold $V$ with negative sectional 
curvatures, the classic results of Koebe, L\"obell and Morse for surfaces \cite{Hedlund-BAMS1939} and Eberlein \cite{Eberlein} in higher dimensions, tell us
that transitivity holds for
the geodesic flow over any regular cover of $V$, apart from over the universal cover $\wtV$.

Let us outline the contents of the paper.
In section \ref{sec:subshifts}, we shall first consider the simpler setting of subshifts of finite type and skew products.  
This is both of independent interest and also illustrative of some of the ideas which will be useful later for Anosov flows. 
Then, in section \ref{sec:Anosov}, we shall consider the case of Anosov flows and 
regular covers.  
In section \ref{sec:geodesic}, we consider an application to marked length spectrum rigidity.
Finally, in section \ref{sec:lie_groups}, we will also consider the 
 case of cocycles valued in other connected Lie groups $G$.

\section{Subshifts of finite type}\label{sec:subshifts}

We first  introduce the  ingredients in the statement for subshifts of finite type.
Let $A$ be a $k \times k$ irreducible  matrix $A$ ($k \geq 2$) with entries in $\{0,1\}$. Writing $A(i,j)$ for the $(i,j)$-entry
of $A$, we say that $A$ is \emph{irreducible} if, for each pair of indices $i,j$, there exists $n \ge 1$ such that $A^n(i,j) >0$.

\begin{definition}
Given $A$ as above, we set 
$$
\Sigma_A = \left\{x = (x_n)_{-\infty}^\infty \in \{1, \cdots, k\}^\mathbb Z \hbox{ : } A(x_{n}, x_{n+1}) = 1 \hbox{ for } n \in \mathbb Z\right\}
$$
and let $\sigma: \Sigma_A \to \Sigma_A$ be 
\emph{the subshift of finite type}
defined by $(\sigma x)_n = x_{n+1}$ for $n \in \mathbb Z$.
\end{definition}
The shift map is a homeomorphism with respect to the  associated metric on $\Sigma_A$ defined by 
$d(x,y) = 2^{-N(x,y)}$, where
$$
N(x,y)= 
\sup\left\{n \in \mathbb Z^+ \hbox{ : }  x_i = y_i \hbox{ for } |i| < n\right\}
$$
for $x = (x_n)_{-\infty}^\infty,  y = (y_n)_{-\infty}^\infty \in \Sigma_A.$
The assumption that $A$ is irreducible is equivalent to the map $\sigma: \Sigma_A \to \Sigma_A$ being  transitive (i.e. that there exists a dense orbit).
We note that here, and for the other dynamical systems we consider in the paper, the phase space has no isolated points and so transitivity is equivalent to forward transitivity
(i.e. that there exists a dense forward orbit). We will use the latter condition when we apply transitivity.

Let $\Gamma$ be a countable group (with the discrete topology) with identity element $e_\Gamma$.
Let $\psi: \Sigma_A \to \Gamma$ be a continuous function on the shift space  (which by recoding we can assume without loss of generality only depends on the zeroth  term $x_0$ of the sequence $x = (x_n)_{n=-\infty}^\infty\in \Sigma_A$).
We define a metric on $\Sigma_A \times \Gamma$ by 
$$
\widehat d\left((x,\gamma), (x', \gamma')\right)
= 
\begin{cases}
d(x,x') &\hbox{ if }\gamma = \gamma' \cr
1  &\hbox{ if } \gamma \neq  \gamma'.
\end{cases}
$$

\begin{definition}
We define  the \emph{skew product} 
$T_\psi : \Sigma_A \times \Gamma \to \Sigma_A \times \Gamma$ 
by
\[
T_\psi (x, \gamma) = (\sigma x,    \psi(x) \gamma).
\]
\end{definition}

We will suppose that the skew product  
$T_\psi :\Sigma_A \times \Gamma \to \Sigma_A \times \Gamma$ 
is transitive. We note that this implies that $\Gamma$ is finitely generated.

Given $n \ge 1$, write
\[
\psi_n(x) = \psi(\sigma^{n-1}x) \cdots \psi(\sigma x)  \psi(x).
\]
We then see that $T_\psi^n(x,\gamma)=(\sigma^nx, \psi_n(x) \gamma)$
and hence that $T_\psi^n(x,\gamma) =(x,\gamma)$ for any (and, equivalently, all) 
$\gamma \in \Gamma$ if and only if $\psi_n(x)=e_\Gamma$.

Let $f : \Sigma_A \to \R$ be a H\"older continuous function and, for $n \ge 1$, write
\[
f^n(x) = f(x) +f(\sigma x) + \cdots + f(\sigma^{n-1}x).
\]
If $f^n(x) =0$ whenever $\sigma^nx=x$ then the classical Liv\v{s}ic theorem tells us 
that there exists a H\"older continuous function
$u : \Sigma_A \to \R$ such that $f = u-u\circ \sigma$ \cite{Li}.
The next theorem addresses what happens if $f^n(x)=0$ on the smaller set of periodic points for which $\psi_n(x)=e_\Gamma$.

 \begin{theorem}\label{thmshiftreal}
Let $\sigma: \Sigma_A \to \Sigma_A$ be a transitive subshift of finite type, $\Gamma$ 
a countable group and $\psi : \Sigma_A \to \Gamma$ a function satisfying 
$\psi((x_n)_{n=-\infty}^\infty)=\psi(x_0)$.
Suppose that $T_\psi : \Sigma_A \times \Gamma \to \Sigma_A \times \Gamma$
is transitive.
If
$f: \Sigma_A \to \mathbb R$ is a H\"older continuous function such that
\[
f^n(x) =0 \mbox{ whenever } \psi_n(x) =e_\Gamma
\]
then there exist
a H\"older continuous function  $u: \Sigma_A \to \R$ 
and a homomorphism $\alpha: \Gamma \to \mathbb R$
such that 
$$
f(x) = u(\sigma x)  - u(x) + \alpha(\psi(x)).
$$
\end{theorem}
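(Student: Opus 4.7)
The plan is to reduce the assertion to the classical Liv\v{s}ic theorem applied to the skew product $T_\psi$ itself, and then to extract the homomorphism $\alpha$ by exploiting the fact that $T_\psi$ commutes with the right $\Gamma$-action on $\Sigma_A \times \Gamma$.

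First I would lift $f$ to the function $F : \Sigma_A \times \Gamma \to \R$ defined by $F(x,\gamma) = f(x)$, which is H\"older in the metric $\widehat d$ with Hölder constant inherited from $f$ (points in distinct fibres are at distance $1$). A point $(x,\gamma)$ is $T_\psi^n$-periodic iff $\sigma^n x = x$ and $\psi_n(x) = e_\Gamma$; on such an orbit $F$ sums to $f^n(x) = 0$ by hypothesis. So $F$ vanishes around every periodic orbit of $T_\psi$.

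Second I would prove a Liv\v{s}ic-type statement for the transitive skew product $T_\psi$: there exists a H\"older $U : \Sigma_A \times \Gamma \to \R$ with $F = U \circ T_\psi - U$. The standard construction---fix a dense forward orbit $\{T_\psi^n(x_0,e_\Gamma)\}_{n\ge 0}$, set $U(T_\psi^n(x_0,e_\Gamma)) = F^n(x_0,e_\Gamma) = f^n(x_0)$, and use the closing lemma of $\sigma$ to control increments---goes through because two points $T_\psi^n(x_0,e_\Gamma)$ and $T_\psi^m(x_0,e_\Gamma)$ that are $\widehat d$-close must lie in the same $\Gamma$-fibre, forcing $\psi_{n-m}(\sigma^m x_0) = e_\Gamma$. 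The shadowing periodic point $y$ of $\sigma$ then satisfies $\psi_{n-m}(y) = e_\Gamma$ as well (since $\psi$ depends only on $x_0$ and $y$ agrees with $\sigma^m x_0$ in the necessary coordinates), so the hypothesis gives $f^{n-m}(y) = 0$, from which the usual H\"older bounds on $U$ follow. Continuous extension from the dense orbit yields the desired $U$.

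Third I would extract $\alpha$. The right translation $R_\eta(x,\gamma) = (x, \gamma\eta)$ commutes with $T_\psi$, and $F \circ R_\eta = F$, so
\[
(U \circ R_\eta - U) \circ T_\psi - (U \circ R_\eta - U) = 0.
\]
Thus $U \circ R_\eta - U$ is a continuous $T_\psi$-invariant function; transitivity of $T_\psi$ forces it to be a constant, which I call $\alpha(\eta)$. Writing $u(x) := U(x,e_\Gamma)$ gives $U(x,\gamma) = u(x) + \alpha(\gamma)$, and a direct computation $\alpha(\eta_1\eta_2) = U(x,\eta_1\eta_2) - U(x,e_\Gamma) = \alpha(\eta_1) + \alpha(\eta_2)$ shows that $\alpha$ is a homomorphism. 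The cocycle equation $F = U \circ T_\psi - U$ then reads
\[
f(x) = u(\sigma x) - u(x) + \alpha(\psi(x)),
\]
which is the conclusion.

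The main obstacle is the second step: running the Liv\v{s}ic machinery on the non-compact, countably-fibred space $\Sigma_A \times \Gamma$ with no hypothesis on $\Gamma$ beyond transitivity of $T_\psi$ (in particular, without amenability, as in \cite{Sharp2024}). Once $U$ is in hand, the passage to $\alpha$ via the commuting $\Gamma$-action is essentially automatic.
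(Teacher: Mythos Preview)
Your proposal is correct and follows essentially the same route as the paper: lift $f$ to $\Sigma_A\times\Gamma$, run the classical Liv\v{s}ic construction along a dense $T_\psi$-orbit (using that close orbit points lie in the same $\Gamma$-fibre so the shadowing periodic point inherits $\psi_n=e_\Gamma$), and then extract $\alpha$ from the fact that $U\circ R_\eta-U$ is $T_\psi$-invariant and hence constant. The paper organises this into four steps, with an intermediate Step~2 establishing $\Gamma$-invariance of differences $\widehat u(y,\gamma_1)-\widehat u(y,\gamma_2)$ via an approximation argument; your formulation via the commuting right $\Gamma$-action is slightly more direct but amounts to the same thing.
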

 \medskip

We  now give the  proof of this theorem. 

\medskip
\noindent
{\bf Step 1.}
We can define $\widehat f: \Sigma_A \times \Gamma \to \mathbb R$ to be the trivial lift given by   $\widehat f(x, \gamma) = f(x)$  for 
$(x, \gamma) \in \Sigma_A \times \Gamma$.  Then following the classical proof of the Liv\v{s}ic  theorem \cite{Li}
 we can use the transitivity of $T_\psi$ to choose a point $\widehat z = (z, \zeta) \in \Sigma_A \times \Gamma$
(where  $z = (z_k)_{k=-\infty}^\infty$)
 whose 
 $T_\psi$-forward orbit 
\[
\mathcal O(\widehat z) := \bigcup_{n=0}^\infty T_\psi^n(\widehat z)
\]
is dense in $\Sigma_A \times \Gamma$. 
We can then  define $\widehat u : \mathcal O(\widehat z) \to \mathbb R$ by
$$
\widehat u(
T_\psi^n(\widehat z)
)
= 
\sum_{k=0}^{n-1} \widehat  f(T_\psi^k(\widehat z)), \hbox{ for } n \geq 1.
$$
We  will use  the following simple estimate.

\begin{lemma}\label{anosov} There exists $C>0$ such that 
if $n, m > 0$ satisfy 
$$
\widehat d(
T_\psi^n (\widehat z),
T_\psi^{n+m} (\widehat z))
 < 1
$$
then
$$
| \widehat u(T_\psi^n (\widehat z)) - 
\widehat u (T_\psi^{n+m} (\widehat z))|
\leq C \widehat d(
T_\psi^n (\widehat z),
T_\psi^{n+m} (\widehat z)
)^\theta,
$$
where $\theta$ is the H\"older exponent of $f$.
\end{lemma}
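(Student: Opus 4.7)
The plan is to follow the classical Liv\v{s}ic/closing-lemma argument, adapted to keep track of the group coordinate. Writing $T_\psi^n(\widehat z) = (x, \gamma)$, the telescoping definition of $\widehat u$ gives
\[
\widehat u(T_\psi^{n+m}(\widehat z)) - \widehat u(T_\psi^n(\widehat z)) = \sum_{k=0}^{m-1} \widehat f(T_\psi^k(T_\psi^n(\widehat z))) = f^m(x),
\]
so it suffices to bound $|f^m(x)|$ by a constant multiple of $\widehat d(T_\psi^n\widehat z, T_\psi^{n+m}\widehat z)^\theta$. The assumption $\widehat d<1$ forces the group coordinates to agree, which is exactly $\psi_m(x) = e_\Gamma$, and reduces the distance to $\widehat d = d(x, \sigma^m x) = 2^{-N}$ for some integer $N\ge 1$. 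By definition of the shift metric this is equivalent to $x_j = x_{j+m}$ for all $|j|<N$.

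Next I would invoke the standard shadowing construction: let $y$ be the bi-infinite $m$-periodic sequence determined by $y_j = x_j$ for $0\le j\le m-1$. Admissibility reduces to checking the single transition $A(y_{m-1}, y_0) = A(x_{m-1}, x_0)$, which equals $A(x_{m-1}, x_m) = 1$ because $x_0 = x_m$ (the $j=0$ instance of the periodicity). Thus $y\in\Sigma_A$ and $\sigma^m y = y$. Since $\psi$ depends only on the zeroth coordinate, $\psi(\sigma^k y) = \psi(x_k) = \psi(\sigma^k x)$ for $0\le k\le m-1$, so $\psi_m(y) = \psi_m(x) = e_\Gamma$, and the hypothesis of Theorem \ref{thmshiftreal} forces $f^m(y) = 0$.

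It remains to bound $|f^m(x)| = |f^m(x)-f^m(y)|$ by $C\cdot 2^{-\theta N}$. A direct count shows that $\sigma^k x$ and $\sigma^k y$ agree on coordinates $|j|<N+\min(k,m-1-k)$: direct agreement holds whenever $k+j\in[0,m-1]$, and extension beyond this interval uses the $m$-periodicity of $y$ together with the relation $x_i = x_{i+m}$ valid on $|i|<N$. H\"older continuity of $f$ then yields a geometric sum
\[
|f^m(x)| \le c \sum_{k=0}^{m-1} 2^{-\theta(N+\min(k,m-1-k))} \le C\cdot 2^{-\theta N},
\]
where $c$ is a H\"older constant for $f$ and $C$ is independent of $m$ and $n$. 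The one place requiring care, and the main bookkeeping obstacle, is verifying this agreement estimate uniformly for $k$ near the endpoints $0$ and $m-1$, where the direct-agreement window is narrow and the gain comes entirely from the $N$-step extensions supplied by the partial periodicity of $x$.
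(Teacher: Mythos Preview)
Your proof is correct and follows essentially the same route as the paper's own argument: both construct the $m$-periodic shadowing point (your $y$, the paper's $\widehat w$), use the hypothesis $\psi_m=e_\Gamma \Rightarrow f^m=0$ to kill the Birkhoff sum over the periodic orbit, and then bound the discrepancy $|f^m(x)-f^m(y)|$ by a geometric series via H\"older continuity. Your write-up is in fact somewhat more explicit than the paper's—you spell out why $\widehat d<1$ forces $\psi_m(x)=e_\Gamma$, verify admissibility of $y$, and check $\psi_m(y)=\psi_m(x)$ from the dependence of $\psi$ on the zeroth coordinate—whereas the paper handles these points more tersely.
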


\begin{proof}
We can modify the  standard argument  from \cite{Li}
to apply to 
$T_\psi: \Sigma_A \times \Gamma \to \Sigma_A \times \Gamma$.  For completeness, we include some details. 
By the definition of $\widehat d$, we can choose the largest $N$ with 
$\widehat z_{n+i} = \widehat z_{n+m + i}$
for $|i| < N$.  
We can then construct a $T_\psi$-periodic point 
$\widehat w = T_\psi^m (\widehat w) \in \Sigma_A \times \Gamma$ by infinitely concatenating the
following string of symbols
$$(z_{n}, \zeta_n), (z_{n+1}, \zeta_{n+1}), \cdots,  (z_{n+m-1}, \zeta_{n+m-1}),$$
where  $T_\psi^k(\widehat z) = (\sigma^k z, \zeta_k)$ for $k \geq 0$.
In particular, 
$$\widehat d (T_\psi^{n+ i} (\widehat z), T_\psi^i (\widehat w)) \leq 2^{-N - \min\{i, m-i\}}
\hbox{ for } 0 \leq i \leq m-1.
$$
 
By definition, we can write
$$
\widehat u(T_\psi^{n+m} (\widehat z)) - 
\widehat u (T_\psi^{n} (\widehat z))
= \sum_{k=n}^{n+m-1} \widehat  f(T_\psi^k(\widehat z) ).
$$
Then by the triangle inequality we can bound
\begin{align*}
&\left|\sum_{k=n}^{m+n-1} \widehat  f(T_\psi^k(\widehat z) ) \right|
= \left|
\sum_{k=n}^{n+ m-1} \widehat  f(T_\psi^k(\widehat z) )
- 
 \sum_{k=0}^{m-1} \widehat  f(T_\psi^k(\widehat w))
\right| \\
&\leq 
\sum_{k=0}^{m-1}
|  f(\sigma^{n+k}z )
- 
 f(\sigma^kw )
| \\
&=\sum_{k=0}^{[m/2]}
|  f(\sigma^{n+k}z )
- 
 f(\sigma^kw )
| 
+
\sum_{k=[m/2] +1}^{n-1}
|  f(\sigma^{n+k}z )
- 
 f(\sigma^kw )
| \\
&\leq 
2 \left( 
\frac{\|f\|_\theta }{1 - 2^{-\theta}}
\right) 2^{-\theta N}.
\end{align*}
The conclusion then follows.
\end{proof}

We can deduce from Lemma \ref{anosov}  that $\widehat u$ is locally H\"older continuous 
 on the dense  set 
 $\mathcal O(\widehat z)$ 
and therefore extends to a locally H\"older continuous function $\widehat u:\Sigma_A \times \Gamma \to \mathbb R$ satisfying 
\begin{equation}\label{eq:u_extends_to_holder}
\widehat u (T_\psi (x, \gamma))
- \widehat u  (x, \gamma) = \widehat f(x, \gamma) 
\end{equation}
for all $x\in \Sigma_A$ and $\gamma \in \Gamma$.

\bigskip
\noindent
{\bf Step 2.} 
Let $y \in \Sigma_A$ and $\gamma_1, \gamma_2
 \in \Gamma$.
Given any $0<\delta <1$, we can 
use the density of 
the orbit of $\widehat z =(z,\zeta)$
to choose $n, m > 0$ such that 
$$
d(\sigma^nz, y ) < \delta, \quad d(\sigma^{n+m}z, y ) < \delta,
$$
$
 \psi_n(z)\zeta = \gamma_1
$
and $ \psi_{n+m}(z) \zeta =\gamma_2$.
In particular, 
$$
\max\left\{
|\widehat u (y, \gamma_1) - \widehat  u(T_\psi^n(\widehat z))|,
|\widehat u (y, \gamma_2) - \widehat u (T_\psi^{n+m}(\widehat z)|
\right\} < \eta, 
$$
where 
\begin{align*}
\eta &= 
\sup_{\widehat d((x,\gamma),(x',\gamma'))<1}\left\{
\frac{|\widehat u(x,\gamma)-\widehat u(x',\gamma')|}
{\widehat d((x,\gamma),(x',\gamma'))^\theta}\right\}
\delta^\theta
\\
&=
\sup_{\substack{d(x,x')<1 \\\gamma \in \Gamma}}
\left\{
\frac{|\widehat u(x,\gamma) - \widehat u(x',\gamma)|}{d(x,x')^\theta}
\right\} \delta^\theta.
\end{align*}
Therefore, by the triangle inequality,
$$
\left|\widehat u (y, \gamma_1) - \widehat u (y, \gamma_2)
- 
\sum_{k=n}^{n+m-1} f(\sigma^k z)
\right| < 2\eta.
$$
Since $\delta>0$ 
can be chosen arbitrarily small and $\sum_{k=n}^{n+m-1} f(\sigma^k z)$ is evidently  independent of $\Gamma$, we can deduce that 
$\widehat u (y, \gamma_1) - \widehat u (y, \gamma_2)$ is independent of the action of  $\Gamma$.
More precisely, a comparison with the corresponding expression for the forward obit of $(z,\zeta \gamma)$ gives that,
for all
$\gamma_1, \gamma_2, \gamma \in \Gamma$,
\begin{equation}\label{eq:independent_of_Gamma}
\widehat u (y,  \gamma_1\gamma ) - \widehat u (y,\gamma_2 \gamma)
=
\widehat u (y, \gamma_1) - \widehat u (y, \gamma_2). 
\end{equation}

\bigskip
\noindent
{\bf Step 3.} 
By construction, we can write for any $x \in \Sigma_A$ and $\gamma_1, \gamma_2 \in  \Gamma$:
\begin{align*}
\widehat  u(T_\psi (x, \gamma_1)) -  \widehat  u(x, \gamma_1) 
&= \widehat f (x, \gamma_1)
  = f(x) 
=  \widehat f(x, \gamma_2) \\
&= \widehat  u(T_\psi (x, \gamma_2)) - \widehat  u(x, \gamma_2).
\numberthis \label{eq:T_invariant}
\end{align*} 
Rearranging (\ref{eq:T_invariant}) gives 
\begin{equation}\label{eq:T_invariant_two}
\widehat  u(T_\psi (x, \gamma_1)) - \widehat  u(T_\psi (x, \gamma_2))
=\widehat  u(x, \gamma_1) -   \widehat u(x, \gamma_2).
\end{equation}
Now choose $\gamma \in \Gamma$ and set 
\[
\Delta_\gamma(x,\gamma_1) = \widehat u(x,\gamma_1\gamma) - \widehat u(x,\gamma_1).
\]
Applying (\ref{eq:T_invariant_two}) with $\gamma_2 = \gamma_1\gamma$
gives that
\[
\Delta_\gamma(T_\psi(x,\gamma_1))=\Delta_\gamma(x,\gamma_1),
\]
for all $x \in \Sigma_A$ and all $\gamma_1 \in \Gamma$,
i.e. $\Delta_\gamma : \Sigma_A \times \Gamma \to \R$ is $T_\psi$-invariant.  
Since $T_\psi$ is transitive we can deduce that $\Delta_\gamma$ takes a constant value
$\alpha (\gamma)$.
In particular, setting $\gamma_1 = e_\Gamma$ gives
\[
\widehat u(x,\gamma)= \widehat u(x,e_\Gamma) +\alpha(\gamma)
\]  
and we define a function $u : \Sigma_A \to \R$ by $u(x)=\widehat u(x,e_\Gamma)$.
Then the identity (\ref{eq:u_extends_to_holder})
gives that 
\begin{align*}
f(x) = f(x,e_\Gamma) &=
\widehat u(T_\psi(x,e_\Gamma)) - \widehat u(x,e_\Gamma)
\\
&=\widehat u(\sigma x,\psi(x)) -\widehat u(x,e_\Gamma)
\\
&= \widehat u(\sigma x,e_\Gamma)  - \widehat u(x,e_\Gamma) +\alpha(\psi(x))
\\
&= u(\sigma x) -u(x) +\alpha(\psi(x)).
\numberthis \label{eq:formula_for_f}
\end{align*}

\bigskip
\noindent
{\bf Step 4.} We claim that $\alpha: \Gamma \to \mathbb R$ is a homomorphism.  
By definition, $\alpha(e_\Gamma)=0$.
Given $\gamma_1, \gamma_2 \in \Gamma$ we can write 
for any $x\in \Sigma$:
\begin{align*}
\alpha(\gamma_1\gamma_2) &=
   \widehat u(x, \gamma_1\gamma_2)- \widehat  u(x,  e_\Gamma) \\
&= \left(\widehat  u(x,  \gamma_1\gamma_2) -   \widehat u(x, \gamma_2)\right)
+ \left( \widehat  u(x,  \gamma_2) -   \widehat u(x, e_\Gamma)\right)
\\
&= \left(\widehat  u(x,  \gamma_1\gamma_2) -   \widehat u(x, \gamma_2)\right) + \left( \widehat  u(x, \gamma_2) -  \widehat u(x, e_\Gamma) \right)\cr
&=\alpha(\gamma_1) + \alpha(\gamma_2).
\end{align*}
This completes the proof of Theorem \ref{thmshiftreal}.

\section{Anosov flows} \label{sec:Anosov}

We will now present the proof of Theorem \ref{th:intro_main_anosov} using the basic strategy of the proof of Theorem \ref{thmshiftreal} in the previous section. We begin with some standard notation and definitions.

Let $X^t : M \to M$ ($t\in \mathbb R$) be  a $C^1$ 
flow on a smooth compact Riemannian manifold 
 $M$ and let $X$ denote the associated vector field. 

 \begin{definition}
 We say that $X^t : M \to M$ is  an \emph{Anosov flow}
if there is a continuous $DX^t$-invariant splitting $TM = E^0 \oplus E^s \oplus E^u$ such that
\begin{enumerate}
\item[(1)] $E^0$ is one-dimensional and tangent to the flow direction; and 
\item[(2)] there exist constants $C, \lambda > 0$ such that
\begin{enumerate}
\item $\|DX^t| E^s\| \leq C e^{-\lambda t}$ for $t \geq 0$; and 
\item $\|DX^{-t}| E^{u}\| \leq C e^{-\lambda t}$ for $t \geq 0$.
\end{enumerate}
\end{enumerate}
\end{definition}

We shall also assume that $X^t: M \to M$ is  transitive.

Let $\widehat M$ be a regular cover for $M$ (with the lifted metric) and let $\pi: \widehat M \to M$ 
denote
 the associated projection.
We write $\Gamma \cong \pi_1(M)/\pi_1(\widehat M)$ for the covering group, which acts freely (on
the right) on $\widehat{M}$.

We let  $\widehat{X}$ and
$\widehat X^t :  \widehat M \to  \widehat M$ denote, respectively, the lifts of  $X$ and $X^t: M \to M$
to
$\widehat{M}$.
Then $\pi \circ \widehat X^t = X^t \circ \pi$, for all $t\in \mathbb R$, and
$\widehat{X}^t(x\gamma) = \widehat{X}^t(x)\gamma$, for all $x\in \widehat{M}$, $t \in \R$ and $\gamma \in \Gamma$.

As in the introduction we assume that $\widehat{X}^t :\widehat{M} \to \widehat{M}$ is transitive.

\begin{remark}[Coudene, \cite{coudene} Theorem 3]
The following gives an equivalent characterization of transitivity of $\widehat X^t: \widehat M \to \widehat M$.
The $\widehat \phi$ recurrent points 
are  dense if and only if $\widehat X^t$ is forward transitive.
In particular, if   the  closed orbits 
are  dense then the flow is forward transitive.
\end{remark}

Let $\Gamma^{\mathrm{ab}} = \Gamma/[\Gamma,\Gamma]$ denote the abelianization of $\Gamma$ and let 
$\phi : \Gamma \to \Gamma^{\mathrm{ab}}$ denote the natural quotient homomorphism.
Clearly, any homomorphism from $\Gamma$ to $\R$ factors through $\Gamma^{\mathrm{ab}}$ so that
$\alpha = \bar{\alpha} \circ \phi$, for some homomorphism $\bar{\alpha} : \Gamma^{\mathrm{ab}} \to \R$,
and $\alpha \mapsto \bar \alpha$ gives an isomorphism between $\mathrm{Hom}(\Gamma,\R)$ 
(the group of homomorphisms from $\Gamma$ to $\R$) and 
$\mathrm{Hom}(\Gamma^{\mathrm{ab}},\R)$.
Furthermore, $\Gamma^{\mathrm{ab}}$ is a quotient of $H_1({\color{black}M},\Z)$ via a surjective homomorphism 
$\kappa : H_1(M,\Z) \to \Gab$.
We can canonically identify, $\mathrm{Hom}(\Gamma^{\mathrm{ab}},\R)$,
and hence $\mathrm{Hom}(\Gamma,\R)$, with a subgroup of $H^1(M,\R)$.
More precisely, there is a canonical isomorphism
\[
\iota : \mathrm{Hom}(\Gamma,\R) \to
\Lambda :=
 \{w \in H^1(M,\R) \hbox{ : } \langle w, h \rangle =0 \ \forall 
h \in \ker \kappa
\},
\]
where $\langle\cdot,\cdot \rangle$ is the natural pairing between cohomology and homology.
In particular, $\alpha$ determines 
a unique
harmonic $1$-form $\omega$ with cohomology class $[\omega] = \iota(\alpha)$,
which is defined by the condition
\[
\int_{x_0}^{x_0\gamma} \widehat{\omega} = \alpha(\gamma),
\]
where $x_0$ is any point in $\widehat{M}$, $\widehat{\omega}$ is the lift of $\omega$ to
$\widehat{M}$ and the integral is taken over any path from $x_0$ to $x_0\gamma$; 
since $[\omega]\in \Lambda$, the integral is independent of 
these choices.
(Instead of requiring $\omega$ to be harmonic, it suffices the take $\omega$ to be a closed $1$-form with $[\omega] =\iota(\bar{\alpha})$ but then
it is only unique modulo the addition of an exact form.)

If $\widehat{\omega(X)}$ is defined to be the lift $\omega(X) \circ \pi$ then we observe that
\[
\widehat{\omega(X)} = \widehat{\omega}(\widehat{X}).
\]
We can now replace $ f : M \to \mathbb R$ by  $f_{\omega}: M \to \mathbb R$ defined by 
$$
f_\omega:= f - \omega(X),
$$
where $\omega$ is an arbitrary harmonic $1$-form with $[\omega] \in \Lambda$.
Since $f$ integrates to zero around every periodic orbit in $\mathcal P(\Gamma)$, so does $f_\omega$.

Following these preliminaries we now turn to the proof of the theorem.

\medskip
\noindent
{\it Proof of Theorem \ref{th:intro_main_anosov}.}
We divide the proof 
into simpler steps.

\bigskip
\noindent
{\bf Step 1.}
 Let $\widehat f_\omega  : \widehat M \to \mathbb R$ be the
lift to $M$  of $ f_\omega : M \to \mathbb R$ {\color{black} defined by} $\widehat f  = f  \circ \pi$.
 Following the classical proof of the Liv\v{s}ic  theorem we can use
 the transitivity of $\widehat{X}^t$ to choose a point $z \in \widehat M$ such that 
the forward orbit
$$\mathcal O(z) := \bigcup_{t=0}^\infty \widehat X^t (z)$$
is dense in $\widehat M$. 
We then define $\widehat u_\omega : \mathcal O(z) \to \mathbb R$ by
$$
\widehat u_\omega (
\widehat X^t (z)
)
= 
\int _{0}^t  \widehat  f_\omega (\widehat X^v (z)) \, dv , \hbox{ for } t > 0.
$$

We  will use  the following useful estimate.

\begin{lemma} There exists $C>0$ and $\epsilon > 0$ such that 
if $t, s > 0$ with 
$$
d(
\widehat X^t (z),
\widehat X^{t+s} (z)) < \epsilon
$$
then
$$
| 
 \widehat  u_\omega(\widehat X^t (z))
 -  \widehat  u_\omega (\widehat X^{t+s} (z)) |
\leq C d(
\widehat X^t (z),
\widehat X^{t+s} (z)
)^\theta
$$
\end{lemma}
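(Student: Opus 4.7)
The plan is to mirror the proof of Lemma~\ref{anosov}, replacing the symbolic concatenation that produced a periodic point by a genuine closed orbit of $\whX^t$ supplied by the Anosov closing lemma, and then to exploit the vanishing hypothesis on $f$ together with the fact that $[\o]\in\Lambda$ annihilates the homology of any orbit in $\mathcal P(\Gamma)$. Fix $\e_0>0$ less than half the injectivity radius of $M$ and less than the constant appearing in the Anosov closing lemma for $X^t:M\to M$; because $\pi:\whM\to M$ is a local isometry, $d(\widehat p,\widehat p\gamma)>2\e_0$ for every $\widehat p\in\whM$ and every $\gamma\ne e_\Gamma$.

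Given $t,s>0$ with $d(\whX^t(z),\whX^{t+s}(z))=\e<\e_0$, project to $M$ and apply the closing lemma. It produces a prime periodic orbit $\wp$ of $X^t$ of least period $\ell(\wp)$ with $|\ell(\wp)-s|\le C_1\e$, a point $y_0\in\wp$, and a H\"older time-change $\tau$ with $|\tau(v)-v|\le C_2\e$ such that
\[
d\bigl(X^{\tau(v)}(y_0),\, X^v(\pi(\whX^t(z)))\bigr)\le C_3\,\e\, e^{-\l\min\{v,s-v\}}, \qquad 0\le v\le s.
\]
Lift $y_0$ to $\widehat y_0\in\whM$ within distance $\e$ of $\whX^t(z)$; since $\pi$ is a local isometry and $\e<\e_0$ lies below the injectivity radius, the shadowing estimate transfers verbatim to $\whM$, so $\whX^{\tau(s)}(\widehat y_0)$ is $O(\e)$-close to $\widehat y_0$ in $\whM$. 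By our choice of $\e_0$ this forces the Frobenius class of $\wp$ to be $\{e_\Gamma\}$, i.e.\ $\wp\in\mathcal P(\Gamma)$. Hence $\int_\wp f=0$ by hypothesis, while $\int_\wp \o(X)\,dt=\langle[\o],[\wp]\rangle=0$ because $[\wp]\in\ker\kappa$ and $[\o]\in\Lambda$; consequently $\int_0^{\ell(\wp)}\widehat f_\o(\whX^v(\widehat y_0))\,dv=0$.

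Writing
\[
\widehat u_\o(\whX^{t+s}(z))-\widehat u_\o(\whX^{t}(z))=\int_0^s \widehat f_\o(\whX^{t+v}(z))\,dv
\]
and subtracting this vanishing integral around $\widehat y_0$, splitting the range at $v=s/2$ and applying the H\"older regularity of $\widehat f_\o$ together with the exponential shadowing bound yields, exactly as in the geometric-series computation at the end of Lemma~\ref{anosov},
\[
\left|\widehat u_\o(\whX^{t+s}(z))-\widehat u_\o(\whX^{t}(z))\right|\le C\,\e^\theta.
\]
The time discrepancy $|\ell(\wp)-s|\le C_1\e$ contributes at most $\|\widehat f_\o\|_\infty\, C_1\,\e=O(\e^\theta)$ and is absorbed into the constant.

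The main obstacle is showing that the closing-lemma orbit actually lies in $\mathcal P(\Gamma)$, since the closing lemma is intrinsically a statement about $M$ while the hypothesis on $f$ is useful only for orbits with trivial Frobenius class. The resolution is to run the shadowing upstairs in the cover, so that the quantitative proximity of $\whX^t(z)$ and $\whX^{t+s}(z)$ in $\whM$ (rather than merely in $M$) forces the lifted closing orbit to close up in $\whM$. Once this is in place, the remainder is a routine continuous-time transcription of Lemma~\ref{anosov}.
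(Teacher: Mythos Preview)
Your proposal is correct and follows essentially the same strategy as the paper. The only cosmetic difference is that the paper applies the Anosov closing lemma directly on $\whM$ (which inherits the uniform local Anosov structure from $M$) to produce the shadowing periodic orbit there, whereas you close on $M$ and then lift; your route has the virtue of making the verification that $\wp\in\mathcal P(\Gamma)$ more explicit, which the paper leaves implicit.
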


\begin{proof}
The proof is analogous to the proof of Lemma \ref{anosov} in the discrete case.
A key ingredient is the Anosov closing lemma 
(see Lemma 13.1 in \cite{An})
which guarantees that there exist $C, \epsilon > 0$ such that,  for any $0 < b < \lambda$, 
if, for some $t, s > 0$,
one can bound 
$d(
\widehat X^t (z),
\widehat X^{t+s} (z)
) < \epsilon
$ 
then there exists 
$T > 0$ and
a point $y =\widehat X^T(y)$ 
satisfying $|T- s| \leq C \epsilon$ and 
$$
d(\widehat X^{\tau+t} (z), \widehat X^\tau (y)) \leq C e^{- b \min\{\tau, s-\tau\}}  
d(\widehat X^t (z), \widehat X^{s+t} (z)),
$$
for $0 \le \tau \le s$.

By definition, we can write
  $$
\widehat  u_\omega(\widehat X^t (z))
 -  \widehat  u_\omega (\widehat X^{s+t} (z))
 = \int_0^s \widehat f_\omega (\widehat X^{\tau+t} (z)) \, d\tau.
$$
Furthermore, using the closing lemma  {\color{black} stated above we can bound} 
\begin{align*}
\left|\int_0^s \widehat f_\omega (\widehat X^{t+\tau}(z)) \, d\tau\right|&=
\left| \int_0^s \widehat f_\omega (\widehat X^{t+\tau}(z)) \, d\tau -  
\underbrace{\int_0^T \widehat f_\omega (\widehat X^{\tau}(y))\, d\tau}_{=0}\right| \\
&\leq \int_0^s \|f_\omega\|_\theta d(\widehat X^{t+\tau} (\widehat x_0), \widehat X^\tau(\widehat y))^\theta \, d\tau   + |T-s| \|f\|_\infty
\\
&\leq 
2d(\widehat X^t (z), \widehat X^{s+t} (z))^\theta \|f_\omega\|_\theta 
 \int_0^\infty
e^{-b\theta \tau} \,
 d\tau   + |T-s| \|f\|_\infty
\\
&\leq K  d(\widehat X^t (z), \widehat X^{s+t} (z))^\theta,  
\end{align*}
say, for some $K>0$ which is independent of $d(\widehat X^t (z), \widehat X^{s+t} (z))$.
\end{proof}

This shows that $\widehat u_\omega$ is 
locally
H\"older continuous on the dense forward orbit 
$\mathcal O(z)$
of $z$ and 
thus
extends to a 
locally
H\"older continuous
 function $\widehat u_\omega :
\widehat  M\to \mathbb R$.   Moreover, by construction we see that
$\widehat u_\omega$ is differentiable along flow lines with
\begin{equation}\label{eq:cohom_eq_on_cover}
L_{\widehat{X}}\hat{u}= \widehat f_\omega  = f _\omega\circ \pi. 
\end{equation}

\bigskip
\noindent
{\bf Step 2.} 
Let $x \in \widehat M$, $T\ge 0$ and $\gamma_1 \in \Gamma$.
Given any $0<\delta \le 1$ we can choose $t, s > 0$ such that 
$$
d( \widehat  X^t (z), \widehat X^T(x) \gamma_1 ) < \delta 
\hbox{ and } d( \widehat  X^{s+t} (z), x) < \delta.
$$
In particular, letting
\[
\eta = \sup_{0<d(x,y) \le 1} \frac{|\widehat u_\omega(x)- \widehat u_\omega(y)|}{d(x,y)^\theta} \delta^\theta,
\]
the triangle  inequality gives 
$$
\left|\widehat u_\omega (\widehat X^T(x)\gamma_1) - \widehat u_\omega (x)
- 
\int_0^s f_\omega  (X^{t+\tau}  (z) ) \, d\tau 
\right| < 2\eta.
$$
Since $\delta>0$ is arbitrary  and, for all $\gamma \in \Gamma$,
\[
\int_0^s \widehat f_\omega  (  \widehat  X^{t+\tau}  (z)) ) \, d\tau = 
\int_0^s f (X^{t+\tau}  (\pi(z))) \, d\tau 
=\int_0^s \widehat f_\omega  (  \widehat  X^{t+\tau}  (z\gamma)) ) \, d\tau,
\]
we can deduce that 
${\color{black} x} \mapsto \widehat u_\omega (\widehat X^T(x)  \gamma_1) - \widehat u_\omega (x )$ is $\Gamma$-invariant,
i.e. 
for all $x \in \widehat{M}$, $T \ge 0$ and 
$\gamma_1, \gamma \in \Gamma$,
\begin{equation}\label{eq:uhat_diff_inv}
\widehat u_\omega (\widehat X^T(x)  \gamma\gamma_1 ) - \widehat u_\omega (x \gamma)
= \widehat u_\omega (\widehat X^T(x)  \gamma_1) - \widehat u_\omega (x).
\end{equation}

\bigskip
\noindent
{\bf Step 3.} 
Rearranging (\ref{eq:uhat_diff_inv}) gives 
\begin{equation} \label{eq:uhat_diff_inv_rearranged}
\widehat u_\omega ( \widehat X^T(x) \gamma\gamma_1 )
-  \widehat u_\omega (\widehat X^T(x)\gamma_1)
 =  \widehat u_\omega (x \gamma) 
- \widehat u_\omega (x )
\end{equation}
for any  $T \ge 0$.
Fixing $\gamma \in \Gamma$, we define $\Delta_\gamma : \widehat{M} \to \R$ by 
$$
\Delta_\gamma(x) : = \widehat u_\omega(x\gamma ) - \widehat u_\omega(x ).
$$
Applying (\ref{eq:uhat_diff_inv_rearranged}) with $ \gamma_1 =e_\Gamma$ gives that 
for all $x \in \widehat M$ we have  $\Delta_\gamma : \widehat M \to \mathbb R$ is $\widehat X^t$-invariant, i.e. 
$\Delta_\gamma(\widehat X^t(x)) = \Delta_\gamma(x)$, for all $t \ge 0$.
Since  $\widehat X^t$ is transitive we can deduce that $\Delta_\gamma$ takes a constant value $\alpha_\omega(\gamma)$, say.
In particular, 
\begin{equation}\label{eq:alpha_is_obstruction}
 \widehat u_\omega (x\gamma ) =  \widehat u_\omega (x) + \alpha_\omega(\gamma). 
\end{equation}

\bigskip
\noindent
{\bf Step 4.} We claim that the map $\alpha_\omega: \Gamma \to \mathbb R$ is a homomorphism.  Given $\gamma, \gamma' \in \Gamma$ we can write 
for any $x\in \widehat{M}$:
\begin{align*}
\alpha_\omega(\gamma\gamma') &=
 \widehat u_\omega (x \gamma \gamma') -  \widehat u_\omega (x)   
 \\
&= \left( \widehat u_\omega (x  \gamma  \gamma') - \widehat u_\omega (x  \gamma) \right)
{\color{black} + } \left(\widehat u_\omega (x  \gamma)  - \widehat u_\omega (x) \right)
\\
&= \left( \widehat u_\omega (x  \gamma') -  \widehat u_\omega ( x)  \right)  
{\color{black} + }  \left( \widehat u_\omega ({\color{black} x} \gamma)  - \widehat u_\omega (\color{black} x) )\right)
\\
&=\alpha_\omega(\gamma') + \alpha_\omega(\gamma),
\end{align*}
where for the last step we have used (\ref{eq:uhat_diff_inv}) with $T=0$ and $\gamma_1=\gamma'$.

\medskip
\noindent
{\bf Step 5.} 
 Unlike the proof for skew products there is an extra step required to push a solution down from $\widehat M$
to $M$. To do this, given equation (\ref{eq:alpha_is_obstruction}), we need to choose $\omega$ so that
$\alpha_\omega=0$.
We claim that this holds if we set $\omega$ to be the unique harmonic $1$-form satisfying
$[\omega]= \iota(\alpha_0)$.

For all $y \in \widehat{M}$, we have
\begin{align*}
(\widehat u_\omega-\widehat u_0)(\widehat X^t (y)) -(\widehat u_\omega-\widehat u_0)(y) 
&=- \int_0^t \widehat{\omega(X)}(\widehat{X}^s(y)) \, ds
\\
&=- \int_0^t \omega(X)(X^s(\pi (y))) \, ds.
\end{align*}
Now let $x \in \widehat M$ and $\gamma \in \Gamma$. By transitivity, we can choose
$y \in \widehat{M}$ and $t>0$ such that $y$ is close to $x$ and $\widehat X^{t}(y)$ is close to 
$x\gamma$. In particular, given $\epsilon>0$, we can arrange that
\begin{align*}
&\left|\alpha_\omega(\gamma)-\alpha_0(\gamma)
+ \int_0^t \widehat{\omega(X)}(\widehat{X}^s(y)) \, ds\right| 
\\
&=\left|(\widehat u_\omega-\widehat u_0)(x \gamma) -(\widehat u_\omega-\widehat u_0)(x)  )
+ \int_0^t \widehat{\omega(X)}(\widehat{X}^s(y)) \, ds\right| 
\\
&=|
((\widehat u_\omega-\widehat u_0)(x \gamma) -(\widehat u_\omega-\widehat u_0)(x)  )
-
(\widehat u_\omega-\widehat u_0)(\widehat X^{t} (y) -(\widehat u_\omega-\widehat u_0)(y)) )
|<\epsilon.
\end{align*}
and
\[
\left|\alpha_0(\gamma) -\int_0^t \widehat{\omega(X)}(\widehat X^sy) \, ds\right| =
\left|\int_{x}^{x\gamma} \widehat{\omega} -\int_0^t \widehat{\omega(X)}(\widehat X^sy) \, ds\right| <\epsilon.
\]
Since $\epsilon>0$ can be chosen arbitrarily small we can conclude that, for all $\gamma \in \Gamma$, $\alpha_\omega(\gamma)=0$.
Therefore, $\widehat u_\omega$ descends to $u : M \to \mathbb R$ and,
by (\ref{eq:cohom_eq_on_cover}), we have $f_\omega=L_Xu_\omega$, so that
\[
f = L_Xu +\omega(X).
\]

This completes the proof of Theorem \ref{th:intro_main_anosov}. \qed

\section{Application to marked length spectrum rigidity}\label{sec:geodesic}

Let $S$ be a smooth compact orientable surface of genus at least $2$ and
let $\mathcal F(S)$ denote the set of non-trivial free homotopy classes on $S$.
(This is in natural one-to-one correspondence with the set of non-trivial
conjugacy classes in $\pi_1(S)$.) 
Otal \cite{Otal} and, independently, Croke \cite{Croke} showed that a negatively curved Riemannian metric $g$ on $S$ is determined (up to isometry) by its marked length spectrum.
To state this more precisely, define a map
$\ell_g :\mathcal F(S) \to \R$, where $\ell_g(\alpha)$ is the length of the unique
closed geodesic (with respect to $g$) in $\alpha$. 
Otal and Croke then showed the following.

\begin{proposition}[\cite{Croke}, \cite{Otal}]\label{prop:Otal-Croke}
If $g_1$ and $g_2$ are Riemannian metrics of negative curvature on $S$
such that $\ell_{g_1}=\ell_{g_2}$ then $g_1$ and $g_2$ are isometric. 
\end{proposition}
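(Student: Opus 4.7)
The approach I would follow is the classical strategy of Otal and Croke: reconstruct the metric from the geodesic flow and the Liouville current on the space of geodesics in the universal cover.

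The first step is to pass to the universal cover $\widetilde{S}$ with lifted metrics $\widetilde{g}_1,\widetilde{g}_2$ and to observe that each negatively curved metric compactifies $\widetilde{S}$ by a visual boundary circle $\partial_\infty \widetilde{S}$ on which $\pi_1(S)$ acts by homeomorphisms. The first task is to produce an equivariant H\"older homeomorphism $\phi$ between the two boundary circles that conjugates the two $\pi_1(S)$-actions. The cleanest route is through the geodesic flows on $T^1(S,g_i)$, which are both Anosov: marked length spectrum equality matches the periodic orbits class by class with the same periods, and structural stability together with the density of periodic orbits promotes this to an orbit equivalence preserving the period function. Lifting this conjugacy to the universal covers and passing to boundaries yields $\phi$.

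The second step is to pass from flow conjugacy to an equality of Liouville currents in Bonahon's sense. Each metric $g_i$ carries a Liouville current $\mu_{g_i}$, a $\pi_1(S)$-invariant Radon measure on $(\partial_\infty \widetilde{S} \times \partial_\infty \widetilde{S} \setminus \Delta)/\mathbb{Z}_2$, characterised by the identity $i(\mu_{g_i},\alpha) = \ell_{g_i}(\alpha)$ for every free homotopy class $\alpha \in \mathcal{F}(S)$, where $i(\cdot,\cdot)$ is Bonahon's geometric intersection form and $\alpha$ is identified with the current supported on its axis. Since free homotopy classes are dense in the cone of geodesic currents and $i$ is continuous, the hypothesis $\ell_{g_1} = \ell_{g_2}$ upgrades via $\phi$ to the equality $\phi_* \mu_{g_1} = \mu_{g_2}$.

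The final step, and the main obstacle, is to turn this equality of currents into an honest isometry $(S,g_1) \to (S,g_2)$. This is the geometric heart of the theorem and is where either Otal's or Croke's original argument must be invoked. Otal fixes a generic pair of points $p,q \in \widetilde{S}$ and uses the Liouville measure of a suitable family of geodesic arcs joining neighbourhoods of $p$ and $q$ to force the two infinitesimal geometries to agree; Croke's independent proof proceeds by a filling argument comparing geodesic circles. I expect this transfer of information from a boundary/measure-theoretic equality back to pointwise geometry to be by far the most delicate part of the argument, with the preceding steps being essentially formal once the Anosov and geodesic-current machinery are in place.
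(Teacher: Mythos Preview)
The paper does not give its own proof of this proposition: it is stated with attributions to Otal and Croke and then used as a black box in Section~\ref{sec:geodesic} to deduce Proposition~\ref{prop:cover-rigidity}. So there is no in-paper argument to compare your proposal against.

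That said, your outline is a faithful sketch of the Otal strategy (currents on the boundary, intersection form, then the geometric reconstruction), and you are right that the substantive content lies entirely in your Step~3, which you explicitly defer to the original papers. One small caveat on Step~1: invoking ``structural stability'' is not quite the right mechanism, since $g_1$ and $g_2$ need not be $C^1$-close. The orbit equivalence between the two geodesic flows comes instead from the fact that both universal covers are Gromov hyperbolic with the same boundary circle (the Gromov boundary of $\pi_1(S)$), so the spaces of unit-speed geodesics are canonically identified; equality of periods then upgrades the orbit equivalence to a flow conjugacy via the classical Liv\v{s}ic theorem. With that adjustment your plan is a correct high-level road map, but as written it is a summary of the literature rather than an independent proof.
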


More generally, let $\widehat S$ be a regular cover of $S$. Given a free homotopy class 
$\alpha$, let $\mathcal F(S,\widehat S)$ denote the set of free homotopy classes on $S$ that lift to closed curves on $\widehat{S}$. 
This set is non-empty unless $\widehat S = \widetilde S$, the universal cover.

The following result was obtained independently by 
Hao (\cite{Hao}, Theorem B and subsequent comments) and
Wan, Xu and Yang (\cite{WXY}, Theorem 1.10). Furthermore, as pointed out in \cite{CR} and \cite{WXY}
this is also implicit in \cite{Bonahon_MSRI}.

\begin{proposition} \label{prop:cover-rigidity}
Let $\widehat S$ be a regular cover of $S$ that is not the universal cover.
If $g_1$ and $g_2$ are Riemannian metrics of negative curvature on $S$
such that $\ell_{g_1}|_{\mathcal F(S,\widehat{S})}=\ell_{g_2}|_{\mathcal F(S,\widehat{S})}$ then $g_1$ and $g_2$ are isometric.
\end{proposition}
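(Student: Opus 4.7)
The plan is to deduce Proposition \ref{prop:cover-rigidity} from the Otal--Croke theorem (Proposition \ref{prop:Otal-Croke}) by upgrading the partial equality of marked length spectra to an equality on all of $\mathcal F(S)$, using Theorem \ref{th:intro_main_anosov} applied to the geodesic flow of $g_1$.

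First I would set up the Anosov flows. Let $M$ denote the unit tangent bundle of $S$ and let $X_i^t : M \to M$ be the geodesic flow of $g_i$ with generator $X_i$; both are transitive Anosov flows, and $M$ depends only on the smooth structure of $S$. The regular cover $\widehat S \to S$ induces a regular cover $\widehat M \to M$ (obtained as the pullback of $\widehat S$ under the bundle projection) with the same covering group $\Gamma$. Because $\widehat S$ is not the universal cover, the Koebe--L\"obell--Morse theorem cited in the introduction guarantees that the lifted flow $\widehat X_1^t : \widehat M \to \widehat M$ is transitive. The canonical bijection between $\mathcal F(S)$ and prime periodic orbits of $X_1^t$ restricts to a bijection between $\mathcal F(S,\widehat S)$ and $\mathcal P(\Gamma)$.

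Next I would invoke a standard orbit-equivalence argument to obtain a H\"older homeomorphism $\Phi : M \to M$, homotopic to the identity, sending $X_1^t$-orbits to $X_2^t$-orbits, together with a positive H\"older function $a : M \to (0,\infty)$ such that, for every prime periodic orbit $\gamma$ of $X_1^t$ in the free homotopy class $\alpha$,
\[
\int_\gamma a = \ell_{g_2}(\alpha).
\]
The H\"older function $f := a - 1$ then satisfies $\int_\gamma f = \ell_{g_2}(\alpha) - \ell_{g_1}(\alpha)$, which vanishes for $\alpha \in \mathcal F(S,\widehat S)$ by hypothesis. Theorem \ref{th:intro_main_anosov} accordingly furnishes a H\"older continuous $u : M \to \R$ and a smooth closed $1$-form $\omega$ on $M$ with $a - 1 = L_{X_1} u + \omega(X_1)$, and integrating around an arbitrary prime closed orbit gives
\[
\ell_{g_2}(\alpha) - \ell_{g_1}(\alpha) = \int_\gamma \omega \qquad \text{for every } \alpha \in \mathcal F(S).
\]

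The remaining step, which I expect to be the principal point, is to show that the right-hand side vanishes identically. For this I would combine orientation reversal with the Gysin sequence. The involution $I : M \to M$, $I(v) = -v$, conjugates $X_1^t$ to $X_1^{-t}$, so it sends each prime closed orbit $\gamma$ representing $\alpha$ to a prime closed orbit $\widetilde\gamma$ representing $\alpha^{-1}$; and since length is orientation-insensitive, $\ell_{g_i}(\alpha^{-1}) = \ell_{g_i}(\alpha)$. Because $S$ has genus at least $2$, the Euler class of the circle bundle $\pi : M \to S$ is non-zero, so the Gysin sequence gives an isomorphism $\pi^* : H^1(S,\R) \xrightarrow{\cong} H^1(M,\R)$; we may therefore replace $\omega$ by a cohomologous representative of the form $\pi^*\eta$ (absorbing the exact correction into $L_{X_1} u$), for which
\[
\int_{\widetilde\gamma} \omega = \int_{\pi\widetilde\gamma} \eta = -\int_{\pi\gamma} \eta = -\int_\gamma \omega,
\]
as $\pi\gamma$ and $\pi\widetilde\gamma$ are the same underlying geodesic in $S$ with opposite orientations. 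Applying the length identity to both $\alpha$ and $\alpha^{-1}$ now forces $\int_\gamma \omega = -\int_\gamma \omega$, so $\int_\gamma \omega = 0$ and hence $\ell_{g_1}(\alpha) = \ell_{g_2}(\alpha)$ for every $\alpha \in \mathcal F(S)$. Proposition \ref{prop:Otal-Croke} then yields that $g_1$ and $g_2$ are isometric.
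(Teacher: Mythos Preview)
Your argument is correct and follows the same overall strategy as the paper: set up the orbit equivalence between the two geodesic flows, apply Theorem~\ref{th:intro_main_anosov} to the time-change cocycle, eliminate the $1$-form contribution so that the full marked length spectra coincide, and then invoke Otal--Croke. The only substantive difference is in how the $\omega$ term is disposed of. The paper does not argue this directly; instead it cites the arguments of Theorem~5.2 of \cite{GRH} to conclude that $X_1^t$ and $X_2^t$ are topologically conjugate, from which equality of all periods follows. You instead give a self-contained argument: use the Gysin sequence (non-vanishing Euler class) to represent $[\omega]$ as $\pi^*[\eta]$, and then exploit the flip involution $v\mapsto -v$ together with $\ell_{g_i}(\alpha^{-1})=\ell_{g_i}(\alpha)$ to force $\int_\gamma\omega=-\int_\gamma\omega$. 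Your route avoids the black-box appeal to \cite{GRH} and does not pass through topological conjugacy at all; the paper's route, by contrast, packages this step inside the existing Gogolev--Rodriguez~Hertz machinery and yields conjugacy as an intermediate conclusion. Both reach the same endpoint.
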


In the particular case that 
 $\widehat S$ is the homology cover of $S$, 
Gogolev and Rodriguez Hertz  showed that this result followed from  their abelian Liv\v{s}ic theorem \cite{GRH}.
However, it is interesting to note that using instead the stronger Theorem \ref{th:intro_main_anosov}
recovers  the general result in Proposition \ref{prop:cover-rigidity}.

More precisely, 
let $X^t: M \to M$ be the geodesic flow on the unit tangent bundle $M = T^1S$
of  $S$ with respect to a negatively curved Riemannian metric $g$. This flow is Anosov and transitive \cite{An}.
If $\widehat S$ a regular  cover of $S$ with covering group $\Gamma$, then
 $\widehat M = T^1\widehat S$
  is a regular $\Gamma$-cover for $M$ and the geodesic flow
$\widehat X^t : \widehat M \to \widehat M$ is a $\Gamma$-extension for $X^t : M \to  M$.
Crucially, there is the following useful result.

\begin{proposition}[Hedlund \cite{Hedlund-BAMS1939}]
Let $\widehat S$ be a regular cover for $S$ that is not equal to the universal cover.
Then  $\widehat X^t : \widehat M \to \widehat M$ is transitive.
\end{proposition}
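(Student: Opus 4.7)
The plan is to pass to the universal cover $\widetilde S$ and work with its boundary at infinity $\partial \widetilde S$, where the dynamics of the geodesic flow are encoded by pairs of boundary points. Write $\Gamma_0 = \pi_1(S)$ and identify $N = \pi_1(\widehat S)$ with a normal subgroup of $\Gamma_0$. Since $\widehat S$ is assumed to differ from the universal cover, $N$ is non-trivial. Because $\widetilde S$ has negative curvature, any non-trivial deck transformation is a hyperbolic isometry whose two fixed points lie in $\partial \widetilde S$, so the limit set $\Lambda(N) \subset \partial \widetilde S$ is non-empty.

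The key algebraic input is the normality of $N$ in $\Gamma_0$. From this I would deduce that $\Lambda(N)$ is $\Gamma_0$-invariant: for any $\gamma \in \Gamma_0$ and any sequence $\eta_j \in N$ with $\eta_j \cdot o \to \xi \in \Lambda(N)$, one has $(\gamma \eta_j \gamma^{-1}) \cdot (\gamma \cdot o) \to \gamma \xi$, and $\gamma \eta_j \gamma^{-1} \in N$. Combining this with the classical fact that $\Gamma_0$ acts cocompactly and minimally on its limit set $\partial \widetilde S$ (i.e.\ every $\Gamma_0$-orbit in $\partial \widetilde S$ is dense, a standard property of non-elementary convex-cocompact groups of negative curvature), the closed non-empty $\Gamma_0$-invariant set $\Lambda(N)$ must be all of $\partial \widetilde S$.

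Next I would translate this into a dynamical statement on $\widehat M = T^1 \widehat S$. A geodesic on $\widehat S$ is recurrent under $\widehat X^t$ if and only if any (hence every) lift to $\widetilde S$ has both its forward and backward endpoints in $\Lambda(N)$; this is a standard fact about geodesic flows on quotients of negatively curved spaces (cf.\ Eberlein). Since $\Lambda(N) = \partial \widetilde S$, every geodesic in $\widehat S$ is recurrent, and in particular the set of recurrent points is dense in $\widehat M$. By the remark of Coudene cited earlier in the paper, density of recurrent points is equivalent to forward transitivity of $\widehat X^t$, which gives the conclusion.

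The bulk of the argument is really classical hyperbolic-geometry bookkeeping: the main thing requiring care is the minimality of the $\Gamma_0$-action on $\partial \widetilde S$ (immediate here because $\Gamma_0$ is cocompact and non-elementary) and the identification of recurrent vectors with pairs of endpoints in $\Lambda(N)$. Neither is deep, but phrasing them so that both variable curvature and higher-rank covers $N$ are handled uniformly is the only subtlety; the normality of $N$ does all the real work, converting a statement about $N$ into a $\Gamma_0$-invariance statement to which minimality applies.
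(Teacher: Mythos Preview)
The paper does not actually prove this proposition; it is simply quoted as a classical result of Hedlund (with Eberlein's extension to higher dimensions mentioned in the introduction and in Remark~\ref{rem:higher_dim}). So there is no argument in the paper to compare against, and your outline must be judged on its own merits.

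Your overall strategy is the standard and correct one: normality of $N=\pi_1(\widehat S)$ in $\Gamma_0=\pi_1(S)$ forces the limit set $\Lambda(N)$ to be $\Gamma_0$-invariant, and minimality of the cocompact $\Gamma_0$-action on $\partial\widetilde S$ then yields $\Lambda(N)=\partial\widetilde S$. That step is fine and is indeed where normality does all the work.

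There is, however, a genuine gap in the step where you translate $\Lambda(N)=\partial\widetilde S$ into density of recurrent points. The assertion that a vector $v\in\widehat M$ is \emph{recurrent} for $\widehat X^t$ if and only if both endpoints of its lifted geodesic lie in $\Lambda(N)$ is not correct: that endpoint condition characterizes membership in the \emph{non-wandering set}, which is what Eberlein actually proves, and non-wandering is strictly weaker than recurrent. Concretely, on a $\Z$-cover of $S$ one has $\Lambda(N)=\partial\widetilde S$, yet the lift of any closed geodesic on $S$ that unwinds the cover is a geodesic on $\widehat S$ escaping to infinity, hence not recurrent. So you cannot conclude that every point of $\widehat M$ is recurrent, and Coudene's criterion does not apply as stated.

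The repair is straightforward and stays within your framework. Once $\Lambda(N)=\partial\widetilde S$, the group $N$ is non-elementary, and it is a standard fact for non-elementary discrete isometry groups of negatively curved spaces that the set of pairs $(\gamma^-,\gamma^+)$ of fixed points of hyperbolic elements $\gamma\in N$ is dense in $\Lambda(N)\times\Lambda(N)\setminus\Delta=\partial\widetilde S\times\partial\widetilde S\setminus\Delta$. Via the Hopf parametrization this gives density of \emph{periodic} orbits of $\widehat X^t$ in $\widehat M$, and then the second sentence of Coudene's remark in the paper yields transitivity. Alternatively, one may bypass Coudene entirely and deduce transitivity directly from $\Lambda(N)=\partial\widetilde S$ using the local product (Hopf) structure, which is closer to Eberlein's original argument.
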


Now suppose that $S$ supports two negatively curved Riemannian metrics $g_1$ and $g_2$, and let $X_1^t : M \to M$
and $X_2^t : M \to M$ be the corresponding geodesic flows (which we can regard as defined on the same space).
Then there is an orbit equivalence $H : M \to M$ that is homotopic to the identity. Furthermore, if $\mathcal P$ denotes the set of prime periodic orbits for $X_1^t$, then the hypothesis $\ell_{g_1}(\alpha) =\ell_{g_2}(\alpha)$ for all
$\alpha \in \mathcal F(S,\widehat S)$ implies that $\wp$ and $H(\wp)$ have the same period for all $\wp \in 
\mathcal P$ with $\langle \wp \rangle_\Gamma = \{e_\Gamma\}$.
Given Theorem \ref{th:intro_main_anosov}, we then
can still apply the arguments of Theorem 5.2 of \cite{GRH} to conclude that 
$X_1^t$ and $X_2^t$ are topologically conjugate. Therefore, $\wp$ and $H(\wp)$ have the same period for \emph{all}
$\wp \in \mathcal P$ and, consequently, $\ell_{g_1}(\alpha) = \ell_{g_2}(\alpha)$ for all $\alpha \in \mathcal F(S)$.
We can then apply Proposition \ref{prop:Otal-Croke} to conclude that $g_1$ and $g_2$ are isometric.

\begin{remark}\label{rem:higher_dim}
The same argument, based on Theorem \ref{th:intro_main_anosov},
works in higher dimensions, in which case transitivity of the geodesic flow lifted to a cover is a theorem of Eberlein (Theorem 3.8 of \cite{Eberlein}). 
Let $V$ be a compact manifold of dimension $n$ with a Riemannian metric $g_0$ of negative sectional curvature.
Then Guillarmou and Lefeuvre \cite{GL} showed that, for $N > 3n/2+8$, there exists $\epsilon>0$ such that
if $g$ is a Riemannian metric on $V$ with the same marked length spectrum as $g_0$ and $\|g-g_0\|_{C^N(V)}
<\epsilon$ then $g$ and $g_0$ are isometric.
Using the above arguments, isometry continues to hold if the marked length spectra agree when restricted to 
free homotopy classes which are trivial with respect to a regular cover $\widehat V$ which is not the universal cover.
\end{remark}

\begin {remark}
In  \cite{BEHLW} there is an analogous  rigidity result to that in \cite{GL}  where the marked length spectrum is replaced by the marked Poincar\'e determinant spectrum.  More precisely, let $V$ be a compact $3$-manifold 
$g$
equipped with a
Riemannian metric of negative sectional curvature and let $X^t : T^1V \to T^1V$ be the corresponding geodesic flow.
Furthermore, let $\Sigma$  be a local Poincar\'e section for 
a periodic orbit $\wp $ for  $X^t$ of length $\ell_g(\wp)$ 
(which is orthogonal to the direction of the flow using  the Sasaki metric).  One  can then associate to $\wp$
the Poincar\'e determinant $p_g(\wp) = \det(DX^{\ell_g(\wp)}|_{E^u})$,  by taking the determinant of the derivative of the Poincar\'e map restricted to the unstable bundle,  and then finally  define  the marked Poincar\'e  determinant spectrum by  $p_g : \mathcal F(V) \to \R$, where $\wp_\alpha$ is the unique periodic orbit corresponding to the free homotopy class $\alpha$ and $p_g(\alpha) := p_g(\wp_\alpha)$.  Then (\cite{BEHLW}, Theorem 1.3) states that, 
if $g$ is a Riemannian metric on $V$ with the  same marked Poincar\'e  determinant spectrum as $g_0$ and $\|g-g_0\|_{C^N(V)}
<\epsilon$ then $g$ and $g_0$ are homothetic.
Using the above arguments, 
the conclusion continues to hold if the marked Poincar\'e determinant  spectra agree when restricted to 
free homotopy classes which are trivial with respect to a regular cover which is not the universal cover.
\end{remark}

\section{Skew products and non-abelian cocycles}\label{sec:lie_groups}

In this section we briefly describe a result analogous to an abelian Liv\v{s}ic theorem for cocycles valued in 
a connected Lie group $G$. 
We will formulate this in the simplest case of skew products.
 
Let $\sigma : \Sigma_A \to \Sigma_A$ be a 
transitive
 subshift of finite type and let $\Gamma$ be
a countable group with the discrete topology.
Let $\psi : \Sigma_A \to \Gamma$ satisfy $\phi((x_n)_{n=-\infty}^\infty)=\phi(x_0)$ and, as in section \ref{sec:subshifts},
define a skew product
$T_\psi : \Sigma_A \times \Gamma \to \Sigma_A \times \Gamma$ by
\[
T_\psi (x,\gamma) = (\sigma x,  \psi(x)\gamma ).
\]
As above, we assume that the skew product is  transitive.

\medskip

Now let $G$ be a connected Lie group with right-invariant metric $d_G$ and suppose 
$f : \Sigma_A \to G$ is an $\theta$-H\"older continuous function.

For cocycles valued in non-abelian groups we require  some additional distortion hypotheses. 
To this end, we define the following quantities.

\begin{definition}
We can quantify the distortion associated to the action of $f$ on $G$ by 
$$
\mu_s = \lim_{n \to +\infty}
\left(
\sup_{x\in \Sigma_A}\|  
\hbox{\rm Ad} (  f_n(x)) 
\|
\right)^{1/n}
\hbox{and }
\mu_u = \lim_{n \to +\infty}
\left(
\sup_{x\in \Sigma_A}
\|  
\hbox{\rm Ad} \left(f_n(x)\right)^{-1}
\|
\right)^{1/n}
$$
where
$f_n(x)= f(\sigma^{n-1}  x) \cdots   f(\sigma x)  f(x)$
and $\mathrm{Ad}(\cdot)$ is the adjoint action on the associated Lie algebra. 
\end{definition}

We impose the following assumption on $f$.

\medskip
\noindent
\emph{Distortion Assumption.} The
H\"older exponent $\theta$ of $f$ is strictly greater than 
\[
\max \left\{
\frac{|\log \mu_s|}{\log 2}, \frac{|\log \mu_s|}{\log 2}\right\}.
\]

If $G$ is compact, nilpotent or soluble than $\mu_s = \mu_u =1$, so the distortion assumption automatically holds.
To formulate a result for  connected Lie  groups $G$ we need the following.  
 
 \begin{definition}
  Let 
$G_f$ be the closed subgroup of $G$ generated by the image $f(\Sigma_A)$ and let 
$Z(G_f) = \{z \in H_f \hbox{ : }  zh = hz \hbox{ for all } h \in G_f\}$
 denote its centre.  
 \end{definition}

We have the following general result.
 
 \begin{theorem}\label{shifts-groups}
 Let $\sigma: \Sigma_A \to \Sigma_A$ be a transitive subshift of finite type, $\Gamma$ 
a countable group and $\psi : \Sigma_A \to \Gamma$ a function satisfying 
$\psi((x_n)_{n=-\infty}^\infty)=\psi(x_0)$,
such that $T_\psi : \Sigma_A \times \Gamma \to \Sigma_A \times \Gamma$
is transitive.
Let $G$ be a connected Lie group and let $f : \Sigma_A \to G$ be a H\"older continuous function satisfying
the distortion assumption, such that 
\[
f_n(x) =e_G \mbox{ whenever } \psi_n(x) =e_\Gamma.
\]
 Then 
$$
f(x) = u(x) u(\sigma x)^{-1} \alpha(\psi(x))^{-1},
$$
where $u: \Sigma \to G$ is H\"older continuous and $\alpha: \Gamma \to Z(G_f)$ is a 
homomorphism.
\end{theorem}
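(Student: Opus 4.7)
The plan is to mirror the four-step strategy used to prove Theorem \ref{thmshiftreal}, replacing the additive structure of $\R$ with the multiplicative structure of $G$ and replacing the elementary H\"older summability estimate of Lemma \ref{anosov} with a non-abelian closing estimate that invokes the distortion assumption.

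\emph{Step 1: constructing $\widehat u$ on the cover.} I would trivially lift $f$ to $\widehat f(x,\gamma) = f(x)$, pick $\widehat z = (z,\zeta)$ whose $T_\psi$-forward orbit $\mathcal O(\widehat z)$ is dense, and define $\widehat u : \mathcal O(\widehat z) \to G$ by $\widehat u(\widehat z) = e_G$ and
\[
\widehat u\bigl(T_\psi^n\widehat z\bigr) = f_n(z)^{-1},
\]
so that $\widehat u \circ T_\psi = \widehat f^{-1}\cdot \widehat u$ along the orbit. The main technical obstacle of the whole proof is the non-abelian analogue of Lemma \ref{anosov}: when $\widehat d(T_\psi^n\widehat z,T_\psi^{n+m}\widehat z)<1$, the $T_\psi$-periodic shadow $\widehat w$ built by concatenation satisfies $\psi_m(w)=e_\Gamma$, so by hypothesis $f_m(w)=e_G$, and the ratio $\widehat u(T_\psi^{n+m}\widehat z)\widehat u(T_\psi^n \widehat z)^{-1}=f_m(\sigma^n z)^{-1}$ must be compared to $f_m(w)^{-1}=e_G$. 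Passing to the Lie algebra via the telescoping Baker--Campbell--Hausdorff expansion, the $k$-th increment is H\"older small of order $2^{-\theta(N+\min\{k,m-k\})}$ but must be conjugated by $\mathrm{Ad}(f_k(w))^{\pm 1}$, introducing weights bounded by $\mu_s^k$ on one half of the orbit and by $\mu_u^{m-k}$ on the other. The distortion assumption $\theta\log 2 >\max\{|\log\mu_s|,|\log\mu_u|\}$ is precisely what makes the two resulting geometric series summable and yields a H\"older bound with some exponent $\theta'>0$. Thus $\widehat u$ extends to a locally H\"older continuous $\widehat u:\Sigma_A\times\Gamma\to G$ satisfying $\widehat u(T_\psi(x,\gamma)) = f(x)^{-1}\widehat u(x,\gamma)$ for every $(x,\gamma)$.

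\emph{Steps 2--3: extracting $u$ and $\alpha$.} Comparing the forward orbits of $(z,\zeta)$ and $(z,\zeta\gamma)$ and using that $f_n(z)$ is independent of the $\Gamma$-coordinate, the density argument of Step 2 in the proof of Theorem \ref{thmshiftreal} shows that
\[
\Delta_\gamma(x,\gamma_1):=\widehat u(x,\gamma_1)^{-1}\widehat u(x,\gamma_1\gamma)
\]
is independent of $\gamma_1$, and a direct substitution in the cocycle equation gives $\Delta_\gamma\circ T_\psi=\Delta_\gamma$. By transitivity of $T_\psi$, $\Delta_\gamma\equiv \alpha(\gamma)$ is constant in $G$. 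Setting $u(x):=\widehat u(x,e_\Gamma)$ yields $\widehat u(x,\gamma)=u(x)\alpha(\gamma)$, and the cocycle equation collapses to $f(x) = u(x)\alpha(\psi(x))^{-1}u(\sigma x)^{-1}$.

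\emph{Step 4: homomorphism and centrality.} The homomorphism law $\alpha(\gamma_1\gamma_2)=\alpha(\gamma_1)\alpha(\gamma_2)$ follows from the chain
\[
u(x)\alpha(\gamma_1\gamma_2)=\widehat u(x,\gamma_1\gamma_2)=\widehat u(x,\gamma_1)\alpha(\gamma_2)=u(x)\alpha(\gamma_1)\alpha(\gamma_2),
\]
exactly as in Step 4 of the proof of Theorem \ref{thmshiftreal}. Since $u$ takes values in $G_f$ (its values on the dense orbit are products of $f^{\pm 1}$) and transitivity forces $\psi$ to generate $\Gamma$, the identity $\alpha(\psi(x))=u(\sigma x)^{-1} f(x)^{-1}u(x)$ puts $\alpha(\Gamma)\subseteq G_f$. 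To upgrade this to centrality I would invoke transitivity of $T_\psi$ a second time: any $\gamma\in\Gamma$ can be realised over a given $y\in\Sigma_A$ along two orbit segments differing by insertion of a $\psi$-loop of trivial Frobenius class (which by hypothesis contributes $e_G$ to the $f$-product), so the two limiting expressions for $\alpha(\gamma)$ as $G_f$-products must agree; their discrepancy, evaluated one step away under $T_\psi$, is exactly the commutator $[f(y),\alpha(\gamma)]$, which therefore vanishes for every $y$, placing $\alpha(\gamma)$ in $Z(G_f)$. Centrality then allows one to commute $u(\sigma x)^{-1}$ past $\alpha(\psi(x))^{-1}$ in the identity from Steps 2--3, giving $f(x) = u(x)u(\sigma x)^{-1}\alpha(\psi(x))^{-1}$ as claimed. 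The indispensable technical ingredient throughout is the Step 1 estimate, where the distortion assumption is used in an essential way; Steps 2--3 are a direct non-abelian transcription, and the centrality portion of Step 4 is the additional bookkeeping needed to pass from the general non-abelian coboundary form to the specific form asserted in the theorem.
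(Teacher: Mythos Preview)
Your Steps 1--3 and the homomorphism portion of Step 4 match the paper's approach; the paper likewise treats the construction of $\widehat u$, its H\"older extension under the distortion assumption, and the extraction of $u$ and $\alpha$ as a ``fairly routine generalization'' of the real-valued argument (citing \cite{PW}, \cite{LW}) and does not spell these out. The one place the paper \emph{does} give detail is precisely the centrality of $\alpha$, and there your argument diverges from the paper's and is where the gap lies.

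The paper's centrality proof is a direct five-line computation, not an orbit-shadowing argument. Working with the cocycle identity in the form $f(x)=\widehat u(T_\psi(x,\gamma))\,\widehat u(x,\gamma)^{-1}$ together with the equivariance $\widehat u(x,\eta\gamma)=\alpha(\gamma)\,\widehat u(x,\eta)$, one expands for an arbitrary $\gamma\in\Gamma$ to obtain
\[
f(x)=\alpha(\gamma)\,\widehat u(\sigma x,\psi(x))\,\widehat u(x,e_\Gamma)^{-1}\,\alpha(\gamma)^{-1}=\alpha(\gamma)\,f(x)\,\alpha(\gamma)^{-1},
\]
so $\alpha(\gamma)$ commutes with every $f(x)$ and hence lies in $Z(G_f)$. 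Your proposed argument---realising $\gamma$ via two orbit segments differing by a trivial-Frobenius loop and asserting that the ``discrepancy, evaluated one step away under $T_\psi$, is exactly the commutator $[f(y),\alpha(\gamma)]$''---is not made precise enough to verify: it is unclear what the two ``limiting expressions for $\alpha(\gamma)$'' are or why their mismatch is that specific commutator. Moreover, with your convention $\widehat u(x,\gamma)=u(x)\alpha(\gamma)$ (i.e.\ $\alpha$ acting on the \emph{right}), substituting into the cocycle identity for varying $\gamma$ produces only a tautology, because the $\alpha(\gamma)$ factors cancel via the homomorphism law; the paper's short conjugation argument genuinely requires the \emph{left} equivariance form. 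As written this is a gap. The cleanest repair is to arrange the conventions so that $\widehat u(x,\eta\gamma)=\alpha(\gamma)\widehat u(x,\eta)$ (equivalently, run the construction with $\widehat u^{-1}$ or with the opposite ordering in the definition of $f_n$), after which the paper's one-line conjugation identity delivers centrality immediately.
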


The proof is a fairly routine generalization (see \cite{PW} or \cite{LW}), except for the fact that $\alpha$ is 
valued in $Z(G_f)$. To see this, we first observe that the proof gives us a function
$\widehat u : \Sigma_A \to G_f$ satisfying 
\[
\widehat u(T_\psi(x,\gamma))\widehat u(x,\gamma)^{-1} = f(x)
\quad \mbox{and} \quad
\widehat u (x,\eta\gamma) = \alpha(\gamma) \widehat u(x,\eta),
\]
for all $x \in \Sigma_A$ and all $\gamma, \eta \in \Gamma$.
Defining $u : \Sigma_A \to G_f$ by $u(x) = \widehat u(x,e_\Gamma)$, we calculate that,
for all $\gamma \in \Gamma$,
\begin{align*}
f(x) &= \widehat u(T_\psi(x,\gamma))\widehat u(x,\gamma)^{-1} \\
&= \widehat u(\sigma x,\psi(x)\gamma) \widehat u(x,\gamma)^{-1}
\\
&= \alpha(\gamma) \widehat u(\sigma x,\psi(x)) \widehat u(x,e_\Gamma)^{-1}\alpha(\gamma)^{-1}
\\
&= \alpha(\gamma) \widehat u(T_\psi(x,e_\Gamma)) \widehat u(x,e_\Gamma)^{-1}\alpha(\gamma)^{-1}
\\
&= \alpha(\gamma) f(x) \alpha(\gamma)^{-1}.
\end{align*}
We therefore conclude that $\alpha(\gamma) \in Z(G_f)$, as required.

\begin{remark}
Since $\alpha$ is valued in the abelian group $Z(G_f)$, it has the form $\alpha = \bar \alpha \circ \phi$, where
$\phi : \Gamma \to \Gamma^{\mathrm{ab}}$ is the abelianization homomorphism and $\bar \alpha: \Gamma^{\mathrm{ab}} 
\to Z(G_f)$ is a homomorphism.
\end{remark}

\begin{question}
Is the distortion assumption a necessary hypothesis for Theorem \ref{shifts-groups}?
In Kalinin's Liv\v{s}ic theorem (for matrix valued cocycles) a distortion result was not required \cite{Ka}.   However, it is not clear how this approach could be adapted to the present setting.  
\end{question}

\end{document}